\numberwithin{equation}{section}
\newtheorem{thm}{Theorem}[section]
\newtheorem{lem}[thm]{Lemma}
\newtheorem{cor}[thm]{Corollary}
\newtheorem{q}[thm]{Question}
\theoremstyle{definition}
\theoremstyle{remark}
\newtheorem{rem}[thm]{Remark}
\renewcommand{\hom}{\operatorname{Hom}}
\renewcommand{\ker}{\operatorname{Ker}}
\newcommand{\Z}{\mathbb{Z}}
\newcommand{\R}{\mathbb{R}}
\newcommand{\C}{\mathbb{C}}
\newcommand{\mediumoplus}[2]{\mbox{\footnotesize$\displaystyle\bigoplus\limits_{#1}^{#2}$}}
\DeclareMathOperator{\aut}{Aut}
\DeclareMathOperator{\rank}{rank}
\DeclareMathOperator{\tr}{tr}
\def\op{\operatorname}
\begin{document}

\title[Representation varieties detect essential surfaces]
{Representation varieties detect essential surfaces}
\author[S.~Friedl]{Stefan Friedl}
\address{Department of Mathematics, University of Regensburg, Germany}
\email{sfriedl@gmail.com}
\author[T.~Kitayama]{Takahiro KITAYAMA}
\address{Graduate School of Mathematical Sciences, the University of Tokyo, Japan}
\email{kitayama@ms.u-tokyo.ac.jp}
\author[M.~Nagel]{Matthias Nagel}
\address{Universit\'e du Qu\'ebec \`a Montr\'eal, Qu\'ebec, Canada}
\email{nagel@cirget.ca}
\subjclass[2010]{Primary~57N10, Secondary~57M05, 20E42}
\keywords{3-manifold, essential surface, character variety, Bruhat-Tits building, separable subgroup}

\begin{abstract}
Extending Culler-Shalen theory, Hara and the second author presented a way to
construct certain kinds of branched surfaces in a $3$-manifold from an ideal
point of a curve in the $\op{SL}_n$-character variety.  There exists an
essential surface in some $3$-manifold known to be not detected in the
classical $\op{SL}_2$-theory.  We prove that every connected essential surface
in a $3$-manifold is given by an ideal point of a rational curve in the
$\op{SL}_n$-character variety for some $n$.
\end{abstract}

\maketitle

\section{Introduction}

In this paper we study an extension of Culler-Shalen theory for
higher-dimensional representations.  In their seminal work~\cite{CS} Culler and
Shalen established a method to construct essential surfaces in a $3$-manifold
from an ideal point of a curve in the $\op{SL}_2(\C)$-character variety. The
method is built on a beautiful combination of the theory of incompressible
surfaces in a $3$-manifold, the geometry of representation varieties, and
Bass-Serre theory~\cite{Se1, Se2}. We refer the reader to the
exposition~\cite{Sh} for literature and related topics on Culler-Shalen theory.
Hara and the second author presented an analogous extension of the
Culler-Shalen method to the case of higher-dimensional representations~\cite{HK}.
They showed that certain kinds of branched surfaces (possibly without any branched points) are constructed from an ideal point of a curve in the $\op{SL}_n(\C)$-character variety for a general $n$.
Such a branched surface corresponds to a nontrivial splitting of the $3$-manifold group as a complex of groups~\cite{C, Ha}.

The classical theory for $2$-dimensional representations is not sufficient to
detect all essential surfaces in Haken manifolds.  Throughout the paper let $M$
be a compact connected orientable $3$-manifold.  We denote by $X_n(M)$ the
$\op{SL}_n(\C)$-character variety of $\pi_1 M$.  It was discovered by Boyer and
Zhang~\cite{BZ}, and Motegi~\cite{Mo} that there exist infinitely many Haken
manifolds $M$, which are even hyperbolic, such that $X_2(M)$ has no irreducible
component of positive dimension.  See also \cite{SZ} for further study on the
topic.  We say that \textit{an essential surface $S$ in $M$ is given by an
ideal point $\chi$ of a curve in $X_n(M)$} if $S$ is constructed from $\chi$ by
the Culler-Shalen method or its extension developed in \cite{HK} as described
in Subsection~\ref{subsec_HK}.  Hara and the second author formulated and
raised the following question~\cite[Question 6.1]{HK}.

\begin{q} \label{q_HK}
Does there exist an essential surface in some $3$-manifold $M$ not given by any ideal point of curves in $X_2(M)$ but given by an ideal point of a curve in $X_n(M)$ for some $n$?
\end{q}

The aim of this paper is to show that the extension of Culler-Shalen theory to the case of higher-dimensional representations~\cite{HK} detects all essential surfaces in Haken manifolds. 
The following is the main theorem of this paper, which, in particular, gives an affirmative answer to Question~\ref{q_HK}.

\begin{thm} \label{thm_main}
Every connected essential surface in $M$ is given by an ideal point of a rational curve in $X_n(M)$ for some $n$.
\end{thm}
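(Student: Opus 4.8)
The plan is to recast the statement, via the construction of~\cite{HK}, as the problem of building a single representation of $\pi_1 M$ into $\SL_n$ over a valued field whose action on the Bruhat--Tits building reproduces the splitting dual to $S$, and then to build that representation from the splitting using subgroup separability.

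First I would pass from the surface to a splitting. We may assume $S$ is two-sided. Cutting $M$ along $S$ exhibits $G := \pi_1 M$ as the fundamental group of a graph of groups $\mathcal G$ with a single edge --- an amalgam $G = \pi_1 M_1 *_C \pi_1 M_2$ when $S$ separates, an HNN extension $G = \pi_1(M\setminus S)\,*_C$ otherwise --- in which the edge group $C := \pi_1 S$ is well-defined up to conjugacy and the splitting is non-trivial because $S$ is essential; let $T$ be its Bass--Serre tree. By the construction of~\cite{HK} recalled in Subsection~\ref{subsec_HK} it then suffices to produce an integer $n \ge 2$ and a representation $\rho\colon G \to \SL_n(\C(t))$ such that, with $v$ the $t$--adic valuation: (i) the family of characters $t \mapsto \chi_{\rho_t}$ is non-constant, so its Zariski closure in $X_n(M)$ is a curve --- necessarily a \emph{rational} one, being the closure of the image of a rational map from $\mathbb A^1$ --- on whose smooth projective model $v$ is an ideal point; and (ii) the induced action of $G$ on the affine Bruhat--Tits building of $\SL_n$ over $\C((t))$ has no global fixed point and, on its minimal $G$--invariant subcomplex, restricts to the tree $T$ with the vertex groups of $\mathcal G$ as vertex stabilizers and with edge stabilizer exactly $C$. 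Granting (ii), the branched surface output by the Hara--Kitayama procedure carries no branch locus and is isotopic to $S$, which is what we want.

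To build $\rho$ I would use that, by Przytycki--Wise, the fundamental group of an embedded essential surface is separable; in particular $C$ is separable in $G$, so one may fix a cofinal descending chain of finite-index subgroups $G \ge G_1 \ge G_2 \ge \cdots$ with $C \le G_i$ for all $i$ and $\bigcap_i G_i = C$. The representation $\rho$ should be assembled from representations of the vertex and edge groups of $\mathcal G$ over $\C[t^{\pm1}]$ that agree on $C$, each vertex group being made to fix a vertex of the building at $t = 0$ while some element acting hyperbolically on $T$ is sent to a hyperbolic isometry; this already forces the no-global-fixed-point half of (ii), and a short computation with denominators gives (i). Induced representations are the natural tool here, since they trade ``a finite-index subgroup carrying a good representation'' for ``a larger value of $n$'', and one can transfer the absence of a global fixed point from a subgroup to $G$ by a Shapiro/Mackey argument.

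The hard part will be condition (ii) on the nose, specifically that the edge stabilizer is \emph{exactly} $C$ and not some overgroup --- it is this that distinguishes $S$ from an essential surface of smaller complexity sharing its homology class. Since $C$ has infinite index in $G$, no finite-dimensional complex representation can realize $C$ as the stabilizer of a flag; $C$ must instead appear as the decreasing intersection $\bigcap_i \rho^{-1}(K_i)$ of the $G$--preimages of the congruence levels $K_i$ of the relevant parabolic of $\SL_n(\C[[t]])$, and the crux is to match this congruence filtration against the separability chain $(G_i)$ --- arranging that $\rho(g)$ escapes the parabolic after precisely as many orders of $t$ as the chain takes to separate $g$ from $C$. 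This matching is what governs, and inflates, $n$; carrying it out --- i.e.\ producing a representation whose building stabilizers are controlled exactly --- is the heart of the proof, after which the remaining verifications (that $t=0$ is a genuine ideal point of a rational curve, that the minimal invariant subcomplex is a tree, hence that the Hara--Kitayama surface is unbranched and isotopic to $S$) should be routine.
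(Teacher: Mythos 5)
Your reduction to condition (ii) --- producing $\rho\colon \pi_1 M \to \SL_n(\C(t))$ whose action on the Bruhat--Tits building contains the Bass--Serre tree of the splitting along $S$ with edge stabilizer \emph{exactly} $C=\pi_1 S$ --- is both unproved and stronger than what the statement requires, and this is where the proposal breaks down. You yourself identify the matching of a congruence filtration of a parabolic with a separability chain $(G_i)$ as ``the heart of the proof'' and then leave it entirely unexecuted; no candidate representation is written down, no mechanism is given for forcing the stabilizer to be $C$ rather than an overgroup, and it is far from clear that any finite-dimensional representation over $\C(t)$ can achieve this (as you note, $C$ cannot even be a flag stabilizer, and for induced representations the vertex stabilizers one actually gets are large, commensurated-looking subgroups, not $C$). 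So the core step is a genuine gap, not a routine verification.

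Moreover, the definition of ``given by an ideal point'' in Subsection~\ref{subsec_HK} does not ask for any control of stabilizers: it only asks for \emph{some} PL map $f\colon M \to B_{\tilde\chi}^{(1)}/\pi_1 M$ whose preimage of the midpoints of edges is isotopic to parallel copies of $S$. The paper exploits exactly this freedom. Separability (Theorem~\ref{thm_PW}) is used only through Corollary~\ref{cor_PW}, to find a finite cover $p\colon N\to M$ of some degree $d$ in which $S$ lifts to a \emph{non-separating} surface $T$; the epimorphism $\psi\colon\pi_1 N\to\Z$ dual to $T$ gives diagonal representations $\tilde\rho_z\colon\pi_1 N\to\op{SL}_2(\C)$, $z\in\C^\times$, which are induced up to $\rho_z\colon\pi_1 M\to\op{SL}_{2d}(\C)$; their characters sweep out a rational curve $C_T\subset X_{2d}(M)$ with a unique ideal point, and the equivariant map to the building is then built by hand, sending each vertex $s$ of a triangulation (chosen so that $S$ is normal) to the homothety class of $\bigoplus_i \gamma_i\otimes\Lambda_{\tilde g(\gamma_i^{-1}s)}$, where $\tilde g$ is a height function dual to $\widetilde T$. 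The guarantee that the detected surface is $S$ itself --- the issue you try to settle via exact edge stabilizers --- comes instead from this explicit map and the normal-surface combinatorics (Lemma~\ref{lem_B}), with the building action having large stabilizers throughout. To repair your argument you would either have to carry out the stabilizer-matching construction (with no indication it exists), or abandon it in favour of an explicit equivariant map as above; as written, the proof is incomplete at its decisive step.
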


The proof of Theorem \ref{thm_main} relies on the breakthroughs of Agol~\cite{A} and Wise~\cite{W}, and the subsequent works of Przytycki and Wise~\cite{PW1, PW2} on the separability of subgroups in a $3$-manifold group.
For a given connected essential surface $S$ in $M$ there exists a non-separating lift $T$ of $S$ in some finite cover $N$ of $M$ by the separability of $\pi_1 S$ in $\pi_1 M$.
The non-separating surface $T$ defines abelian representations $\pi_1 N \to \op{SL}_2(\C)$ parameterized in $\C^\times$, which induces an affine curve $D_T$ consisting of representations $\pi_1 M \to \op{SL}_n(\C)$ where $n$ is twice the degree of the cover $N$.
The set of characters of representations in $D_T$ is a desired rational curve in $X_n(M)$ as in the statement of Theorem \ref{thm_main}, which has a unique ideal point.
Then analyzing the structure of the Bruhat-Tits building associated to the function field of $D_T$,
we explicitly construct a PL-map from the universal cover of $M$ to the $1$-skeleton of the building. Finally, we show that the inverse image of midpoints of edges by the PL-map is isotopic to parallel copies of $S$.

The paper is organized as follows. In Section~\ref{sec_HK} we review the
extension of the Culler-Shalen method to the case of
$\op{SL}_n(\C)$-representations in \cite{HK}.  Here we recall some of the
standard facts on $\op{SL}_n(\C)$-character varieties and  Bruhat-Tits
buildings associated to the special linear group. In Subsection~\ref{subsec_HK}, 
we give the precise definition of the sentence
`\textit{an essential surface is given by an ideal point}'.  Section
\ref{sec_PW} provides a brief exposition on the separability of surface
subgroups by Przytycki and Wise~\cite{PW2}.  Section \ref{sec_proof} is devoted
to the proof of Theorem \ref{thm_main}.

\subsection*{Acknowledgment}
We wish to thank Steven Boyer and Takashi Hara for helpful conversations.
The first and the third author was supported by the SFB 1085 `Higher Invariants' at the Universit\"at Regensburg funded by the Deutsche Forschungsgemeinschaft (DFG). 
The second author was supported by JSPS KAKENHI (No.\ 26800032).

\section{$\op{SL}_n$-Culler Shalen theory}\label{sec_HK}

We begin with an overview of the extension of Culler-Shalen theory to the case of higher-dimensional representations in \cite{HK}.

\subsection{Character varieties}

We briefly review the $\op{SL}_n(\C)$-character variety of a finitely generated group.
See \cite{LM, Si1, Si2} for more details.

Let $\pi$ be a finitely generated group.
We define the following affine algebraic set 
\[ R_n(\pi) = \hom(\pi, \op{SL}_n(\C)). \]
The algebraic group $\op{SL}_n(\C)$ acts on the affine algebraic set $R_n(\pi)$ by conjugation.
We denote by $X_n(\pi)$ the GIT quotient of the action~\cite{MFK}:
\[ X_n(\pi) = \hom(\pi, \op{SL}_n(\C)) // \op{SL}_n(\C). \]
The affine algebraic set $X_n(\pi)$ is called the \textit{$\op{SL}_n(\C)$-character variety} of $\pi$.
By definition the coordinate ring $\C[X_n(\pi)]$ is isomorphic to the subring $\C[R_n(\pi)]^{\op{SL}_n(\C)}$ of $\C[R_n(\pi)]$ consisting of $\op{SL}_n(\C)$-invariant functions.
Procesi~\cite[Theorem 1.3]{P} showed that $\C[R_n(\C)]^{\op{SL}_n(\C)}$ is generated by \textit{trace functions} $I_\gamma$ for $\gamma \in \pi$ defined by 
\[ I_\gamma(\rho) = \tr \rho(\gamma) \]
for $\rho \in R_n(\pi)$.
Therefore $X_n(\C)$ is identified with the set of \textit{characters} $\chi_\rho$ for $\rho \in R_n(\pi)$ defined by
\[ \chi_\rho(\gamma) = \tr \rho(\gamma) \]
for $\gamma \in \pi$.
For a compact connected orientable $3$-manifold $M$, we abbreviate $R_n(\pi_1 M)$ and $X_n(\pi_1 M)$ with $R_n(M)$ and $X_n(M)$ respectively to simplify notation.

Let $C$ be an affine variety, and denote by $\C(C)$ its field of rational functions.
We call $C$ an \textit{affine curve} if the transcendence degree of $\C(C)$ over $\C$ equals $1$~\cite[Section 6.5]{F}.
Consider an affine curve $C$ and its projectivisation $\overline C$.
The projective curve~$\overline C$ might not be smooth, but it has a unique smooth model, i.e., there is a smooth projective curve $\widetilde C$ together with a birational map $\widetilde C \dashrightarrow \overline C$ which is universal~\cite[Theorem 7.3]{F}.
Recall that a birational equivalence induces an isomorphism on the associated fields of rational functions \cite[Proposition 6.12]{F}.
Thus their fields of rational functions all agree:
$\C(C) = \C(\overline C) = \C(\widetilde C)$.
To a point $P$ of $\widetilde C$ the local ring~$\mathcal{O}_{\widetilde C, P}$ of $\widetilde C$ at $P$ is associated.
As the point $P$ is a smooth point, the ring $\mathcal{O}_{\widetilde C, P}$ is a discrete
valuation ring, which induces a discrete valuation $v_P$ on $\C(C)$ \cite[Section 7.1]{F}.

An \textit{ideal point} $\chi$ of an affine curve $C$ is a point of its smooth projective model $\widetilde C$ corresponding to a point of $\overline C \setminus C$.
We can equip the rational functions $\C(C)$ with the discrete valuation~$v_\chi$ 
associated to an ideal point~$\chi$ as described above.

\subsection{Bruhat-Tits buildings}

Following the exposition \cite{G}, we describe the Bruhat-Tits building~\cite{BT1, BT2, IM} associated to the special linear group over a discrete valuation field.
See also \cite{AB} for more details on buildings.

Let $F$ be a commutative field equipped with a discrete valuation $v$ which is not necessarily complete.
We denote by $\mathcal{O}_v$ the valuation ring associated to $v$.
The \textit{Bruhat-Tits building} associated to $\op{SL}_n(F)$, which is an $(n-1)$-dimensional simplicial complex $B_v$, is defined as follows:
A vertex of $B_v$ is the homothety class of a lattice in the $n$-dimensional vector space $F^n$, where a \textit{lattice} in $F^n$ is a free $\mathcal{O}_v$-submodule of full rank, and two lattices $\Lambda$ and $\Lambda'$ are \textit{homothetic} if $\Lambda = \alpha \Lambda'$ for some $\alpha \in F^\times$.
A set of $(m+1)$ vertices $s_0, s_1, \dots, s_m$ forms an $m$-simplex in $B_v$ if and only if there exist lattices $\Lambda_0, \Lambda_1, \dots, \Lambda_m$ representing $s_0, s_1, \dots, s_m$ respectively such that after relabeling indices we have the flag relation
\[ \omega \Lambda_m \subsetneq \Lambda_0 \subsetneq \Lambda_1 \subsetneq \dots \subsetneq \Lambda_m, \]
where $\omega$ is an irreducible element of $\mathcal{O}_v$.

The simplicial complex $B_v$ is known to be an \textit{Euclidean building}, and, in particular, a $\op{CAT}(0)$-space with respect to the standard metric.
See for instance \cite[Definition 11.1]{AB} for the definition of an Euclidean building.
Since $\op{SL}_n(F)$ acts on the set of lattices in $F^n$ so that homothety classes and above flag relations are preserved, $\op{SL}_n(F)$ acts also on $B_v$.
This action is \textit{type-preserving}, i.e., there exists an $\op{SL}_n(F)$-invariant map $\tau \colon B_v^{(0)} \to \Z / n \Z$ such that $\tau|_{\Delta^{(0)}}$ is a bijection for each $(n-1)$-simplex $\Delta$ in $B_v$.
Here for a simplicial complex $K$ we denote by $K^{(m)}$ the $m$-skeleton of $K$.
In particular, for any subgroup $G$ of $\op{SL}_n(F)$ the quotient $B_v / G$ is again an $(n-1)$-dimensional simplicial complex.

\begin{rem}
In the case of $n = 2$ the above construction is nothing but the one of the tree associated to $\op{SL}_2(F)$ in \cite{Se1, Se2}.
\end{rem}

\subsection{An ideal point giving an essential surface} \label{subsec_HK}

We summarize the construction in~\cite{HK} of a certain branched surface from an ideal point of a curve in the character variety.
Here we restrict our attention to the case where such a branched surface has no branched points, and is an essential surface.

Let $C$ be a curve in $X_n(M)$ and $\chi$ an ideal point of $C$.
We denote by $t \colon R_n(M) \to X_n(M)$ the quotient map.
There exists a curve $D$ in $t^{-1}(C)$ such that $t|_D$ is not a constant map, and a regular map $\tilde t|_D \colon \widetilde D \to \widetilde C$ on the smooth projective models is induced by $t|_D$.
We take a lift $\tilde \chi \in (\tilde t|_D)^{-1}(\{ \chi \})$, and denote by $B_{\tilde \chi}$ the Bruhat-Tits building associated to $\op{SL}_n(\C(D))$, where the function field $\C(D)$ is equipped with the discrete valuation at $\tilde \chi$.
The \textit{tautological representation} $\mathcal{P} \colon \pi_1 M \to \op{SL}_n(\C(D))$ is defined by
\[ \mathcal{P}(\gamma)(\rho) = \rho(\gamma) \]
for $\gamma \in \pi_1 M$ and $\rho \in D$.
Pulling back the action of $\op{SL}_n(\C(D))$ on $B_{\tilde \chi}$ by $\mathcal{P}$, we obtain the action of $\pi_1 M$ on $B_{\tilde \chi}$.
Extending \cite[Theorem 2.2.1]{CS} to the case of a general $n$, Hara and the second author \cite[Corollary 4.5]{HK} proved that the action is \textit{nontrivial}, i.e., for every vertex of $B_{\tilde \chi}$ its stabilizer subgroup of $\pi_1 M$ is proper.

Recall that a compact orientable properly-embedded surface $S$ in $M$ is called \textit{essential} if for any component $S_0$ of $S$ the inclusion-induced homomorphism $\pi_1 S_0 \to \pi_1 N$ is injective, and $S_0$ is not boundary-parallel nor homeomorphic to the $2$-sphere $S^2$.
We say that \textit{an essential surface $S$ is given by an ideal point $\chi$} if for some lift $\tilde \chi$ of $\chi$ there exists a PL map $f \colon M \to B_{\tilde \chi}^{(1)} / \pi_1 M$ whose inverse image of the set of midpoints of the edges in $B_{\tilde \chi}$ is isotopic to some number of parallel copies of $S$.

When $n = 2$, since $\pi_1 M$ nontrivially acts on the tree $B_{\tilde \chi}$ without inversions, every ideal point $\chi$ gives some essential surface in $M$~\cite[Proposition 2.3.1]{CS}.
In general, it follows from the proof of \cite[Theorem 4.7]{HK} that if $n = 3$ or if $\partial M$ is non-empty, then there exists a PL map $f \colon M \to B_{\tilde \chi}^{(2)} / \pi_1 M$ such that $f^{-1}(Y)$ is a certain branched surface called \textit{essential tribranched surface}~\cite[Definition 2.2]{HK}, where $Y$ is the union of edges in the first barycentric subdivision of $B_{\tilde \chi}^{(2)} / \pi_1 M$ not contained in $B_{\tilde \chi}^{(1)} / \pi_1 M$.
Note that an essential tribranched surface without any branched points is nothing but an essential surface in the usual sense.

\begin{rem}
The authors~\cite{FKN} showed that every closed $3$-manifold $M$ with $\rank \pi_1 M \geq 4$ contains an essential tribranched surface.
\end{rem}

\section{Surface subgroup separability} \label{sec_PW}

We recall the separability of surface subgroups in a $3$-manifold group proved by Przytycki and Wise~\cite{PW2}, which is a key ingredient of the proof of Theorem~\ref{thm_main}.
A subgroup $H$ of a group $G$ is \textit{separable} if $H$ equals the intersection of finite index subgroups of $G$ containing $H$.

\begin{thm}$(${\cite[Theorem 1.1]{PW2}}$)$ \label{thm_PW}
Let $S$ be a connected essential surface $S$ in $M$.
Then $\pi_1 S$ is separable in $\pi_1 M$.
\end{thm}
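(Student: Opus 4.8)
The plan is to deduce Theorem~\ref{thm_PW} from the known virtual specialness of $\pi_1 M$ together with standard separability properties of right-angled Artin groups and their subgroups. First I would reduce to the case where $M$ is closed and hyperbolic (or, more generally, has a hyperbolic piece), since the other cases of the Geometrisation decomposition are handled by classical arguments: for Seifert fibered and solv pieces, surface subgroups are either virtual fibers or carried by the JSJ graph-of-groups structure, and separability in these settings is elementary (graph manifold groups are LERF by Liu and by Przytycki-Wise's earlier work, and Seifert fibered pieces have polycyclic-by-virtually-free-ish structure making the relevant subgroups separable). After cutting $M$ along its JSJ tori and the boundary of a regular neighborhood of $S$, I may assume $S$ is carried by the hyperbolic pieces.

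Next, for a hyperbolic piece, I would invoke the theorems of Agol~\cite{A}, Wise~\cite{W}, and Przytycki-Wise~\cite{PW1} to conclude that $\pi_1 M$ is virtually compact special, i.e.\ it has a finite-index subgroup $G$ that embeds as a quasiconvex subgroup of a right-angled Artin group $A(\Gamma)$ acting on the associated CAT(0) cube complex. The essential surface $S$ is $\pi_1$-injective by hypothesis, so $\pi_1 S \cap G$ has finite index in $\pi_1 S$; it suffices to show $\pi_1 S \cap G$ is separable in $G$, since separability passes up and down finite-index subgroups. The key geometric input is that $\pi_1 S$, being the fundamental group of an essential surface in a hyperbolic $3$-manifold, is a relatively quasiconvex subgroup of $\pi_1 M$ (in the case with cusps) or quasiconvex (in the closed case); this follows from the Tameness Theorem of Agol and Calegari-Gabai together with Thurston's dichotomy — either $S$ is a virtual fiber, in which case one can cut along it and argue directly, or $S$ is geometrically finite, hence its group is (relatively) quasiconvex.

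The main step is then to combine quasiconvexity with the cubulation: a quasiconvex subgroup of a virtually compact special group is itself virtually compact special and, crucially, \emph{separable}. This is exactly the Haglund-Wise canonical completion and retraction machinery~\cite{W} — quasiconvex subgroups of right-angled Artin groups (equivalently, of the fundamental groups of compact special cube complexes) correspond to locally convex subcomplexes, and local convexity lets one build a finite-index subgroup in which the subgroup becomes a retract, which forces separability. Transporting this back through the finite-index inclusion $G \le \pi_1 M$ and then through the JSJ decomposition for the mixed and graph-manifold cases (using that separability is preserved under graph-of-groups amalgamation with separable edge and vertex subgroups, after passing to suitable finite covers where the pieces have embedded boundary), yields that $\pi_1 S$ is separable in $\pi_1 M$.

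The hard part will be the organization of the non-hyperbolic and mixed cases: one must carefully pass to a finite cover in which the JSJ tori lift to embedded non-separating or well-positioned surfaces, arrange that the relevant surface subgroup is carried in a controlled way by the pieces, and then assemble the local separability statements across the graph-of-groups using the efficiency criteria; verifying that the virtual fiber case does not obstruct separability (it does not, because fiber subgroups are always separable via the associated $\Z$-cover) is a subtle but essential check. Everything quasiconvex-geometric in the hyperbolic case is by now standard given~\cite{A, W, PW1}; the genuinely delicate bookkeeping is the reduction to that case, which is precisely the content of~\cite{PW2} and which I would cite rather than reprove.
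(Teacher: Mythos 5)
The statement you were asked to prove is not actually proved in the paper: Theorem~\ref{thm_PW} is quoted from Przytycki--Wise \cite{PW2}, and the text that follows it only records the attribution (the graph-manifold case from \cite{PW1}, the hyperbolic case from \cite{W} and \cite{A}, where in fact every finitely generated subgroup of $\pi_1 M$ is separable). Since your proposal also ends by declaring that the delicate reduction ``is precisely the content of \cite{PW2} and which I would cite rather than reprove,'' your bottom line coincides with the paper's: cite the theorem. In that sense the approaches agree.

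That said, the surrounding sketch contains a genuine error which you should not let stand if the sketch is meant to carry any weight. Graph manifold groups are \emph{not} LERF, and the Seifert-fibered and graph-manifold cases are not ``classical'' or ``elementary'': there are immersed essential surfaces in graph manifolds whose fundamental groups are not separable (Rubinstein--Wang's surfaces that lift to embeddings in no finite cover; see also Niblo--Wise for non-LERF graph manifold groups). Establishing separability for \emph{embedded} essential surfaces in graph manifolds and in mixed manifolds is exactly the nontrivial content of \cite{PW1} and \cite{PW2}; it is not a formal consequence of virtual specialness of the hyperbolic pieces plus a graph-of-groups combination argument. Your outline of the hyperbolic case (virtual compact specialness via \cite{A, W}, the geometrically finite/virtual fiber dichotomy via tameness, and separability of quasiconvex subgroups via canonical completion and retraction) is a fair summary of the literature, but the JSJ reduction you dismiss as bookkeeping is the hard part of the theorem, so the honest form of your argument is what the paper does: state the result and cite \cite{PW2}.
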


Theorem \ref{thm_PW} was proved by Przytycki and Wise~\cite{PW1} when $M$ is a graph manifold, and by Wise~\cite{W} when $M$ is a hyperbolic manifold.
In fact, every finitely generated subgroup of $\pi_1 M$ is separable when $M$ is a hyperbolic manifold, by Wise~\cite{W} in the case where $M$ has a non-empty boundary and by Agol~\cite{A} in the case where $M$ is closed.
See also Liu~\cite{L} for a refinement of the separability.

The following is a topological interpretation of Theorem~\ref{thm_PW}.
While it is well-known for experts, nevertheless we give a proof for the sake of completeness.
See also \cite[Lemma 1.4]{Sc}.

\begin{cor} \label{cor_PW}
For an essential surface $S$ in $M$ there exists some finite cover of $M$ where the inverse image of $S$ contains a non-separating component.
\end{cor}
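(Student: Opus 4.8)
The plan is to translate the statement, via van Kampen's theorem, into a question about finite quotients of $\pi_1 M$ that is then answered by the separability supplied by Theorem~\ref{thm_PW}. First I would reduce to the case that $S$ is connected: a component $S_0$ of an essential surface is again essential, and any component of $p^{-1}(S_0)$ that is non-separating in a cover $p\colon N\to M$ is in particular a non-separating component of $p^{-1}(S)$. If $S$ is already non-separating in $M$ we may take $N=M$, so assume $S$ separates $M$ into two pieces $M_1,M_2$; since $S$ is incompressible this yields a splitting $\pi_1 M=\pi_1 M_1*_{\pi_1 S}\pi_1 M_2$ with all structure maps injective. (That $\partial M$ may be nonempty is harmless, as the components of $\partial N$ lying over $\partial M\setminus S$ play no role below.)

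Next I would set up the combinatorial dictionary. For a connected finite cover $p\colon N\to M$, cutting $N$ along $p^{-1}(S)$ gives pieces which are exactly the components of $p^{-1}(M_1)$ and of $p^{-1}(M_2)$, reglued along the components of $p^{-1}(S)$; encode this by a finite graph $G$ with one vertex per piece and one edge per component of $p^{-1}(S)$, each edge joining the two pieces it is glued between. Then $G$ is connected, it is bipartite since a collar of $S$ in $M$ meets $M_1$ on one side and $M_2$ on the other, and a component of $p^{-1}(S)$ fails to separate $N$ exactly when deleting the corresponding edge of $G$ leaves $G$ connected, i.e.\ exactly when that edge lies on a cycle. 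Hence it suffices to produce a cover whose graph $G$ is not a tree, equivalently has at least as many edges as vertices. If $N$ is the cover corresponding to $\Gamma=\ker\phi$ for a surjection $\phi\colon\pi_1 M\to F$ onto a finite group, then the numbers of components of $p^{-1}(M_1)$, of $p^{-1}(M_2)$, and of $p^{-1}(S)$ equal $|F|/|\phi(\pi_1 M_1)|$, $|F|/|\phi(\pi_1 M_2)|$, and $|F|/|\phi(\pi_1 S)|$, so a routine count shows that the tree condition already fails as soon as $[\phi(\pi_1 M_i):\phi(\pi_1 S)]\ge 2$ for both $i=1,2$.

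Finally I would produce such a $\phi$, which is the only place where separability enters. Since $S$ is essential, in particular not boundary-parallel, $\pi_1 S$ is a proper subgroup of each $\pi_1 M_i$ (otherwise, $S$ being incompressible, $M_i$ would be homeomorphic to $S\times I$ and $S$ would be boundary-parallel), so pick $g_i\in\pi_1 M_i\setminus\pi_1 S$. By Theorem~\ref{thm_PW} the subgroup $\pi_1 S$ is separable in $\pi_1 M$, so for each $i$ there is a finite quotient $\phi_i\colon\pi_1 M\to F_i$ with $\phi_i(g_i)\notin\phi_i(\pi_1 S)$; taking $\phi$ to be the corestriction of $(\phi_1,\phi_2)$ to its image, one checks that $\phi(g_i)\notin\phi(\pi_1 S)$ for both $i$, whence $\phi(\pi_1 M_i)\supsetneq\phi(\pi_1 S)$ as required. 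I expect the main point to be getting the dictionary of the second paragraph right---matching ``a non-separating component of $p^{-1}(S)$'' with ``some edge of $G$ lies on a cycle'' while keeping track of the connected bipartite structure; granting Theorem~\ref{thm_PW}, the remaining group theory and the Euler-characteristic count for $G$ are routine.
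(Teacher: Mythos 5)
Your proposal is correct and follows essentially the same route as the paper: use separability (via Hempel's product theorem to see $\pi_1 S$ is proper in both complementary pieces) to find a finite quotient $\varphi$ with $\varphi(\pi_1 S)\neq\varphi(\pi_1 M_\pm)$, then count components of the preimages in the corresponding cover and conclude that the preimage of $S$ has at least as many components as its complement, forcing a non-separating component. The only difference is presentational: you make explicit the dual-graph/tree dictionary and the product-of-two-quotients trick, both of which the paper leaves implicit.
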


\begin{proof}
We may assume that $S$ is connected and separating.
Let $M_+$ and $M_-$ be the two components of the complement of $S$.
It follows from \cite[Theorem~10.5]{He} that $\pi_1 S$ has index at least two in $\pi_1 M_-$ and in $\pi_1 M_+$.
It follows from Theorem~\ref{thm_PW} that there exists an epimorphism $\varphi \colon \pi_1 M \to G$ to a finite group such that $\varphi(\pi_1 S) \neq \varphi(\pi_1 M_\pm)$.
In particular, we have
\[ [G ~\colon~ \varphi(\pi_1 S)] \geq 2 [G ~\colon~ \varphi(\pi_1 M_\pm)]. \]
Let $p \colon M_\varphi \to M$ be the covering corresponding to $\ker \varphi$.
The numbers of components of $p^{-1}(M_\pm)$ and $p^{-1}(S)$ are equal to $[G \colon \varphi(\pi_1 M_\pm)]$ and $[G ~\colon~ \varphi(\pi_1 S)]$ respectively, and the above inequality implies
\[[G ~\colon~ \varphi(\pi_1 S)] \geq [G ~\colon~ \varphi(\pi_1 M_+)] + [G ~\colon~ \varphi(\pi_1 M_-)]. \]
Thus the number of components of $p^{-1}(S)$ is greater than or equal to that of its complement, which shows that some component of $p^{-1}(S)$ is non-separating.
\end{proof}

\section{Proof of the main theorem} \label{sec_proof}

Now we prove the main theorem.
For the readers' convenience we recall the statement.

\begin{thm}[Theorem \ref{thm_main}]
Every connected essential surface in $M$ is given by an the ideal point of a rational curve in $X_n(M)$ for some $n$.
\end{thm}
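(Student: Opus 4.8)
The plan is to follow the strategy already sketched in the introduction, carrying out each step carefully. First I would invoke Corollary~\ref{cor_PW} to pass to a finite cover $p \colon N \to M$ of degree $d$ in which the preimage $p^{-1}(S)$ has a non-separating component $T$. The non-separating surface $T \subset N$ gives a primitive class in $H^1(N;\Z)$, hence an epimorphism $\phi \colon \pi_1 N \to \Z$; composing with the characters $\Z \to \C^\times$, $1 \mapsto z$, and placing each such character on the diagonal in $\op{SL}_2(\C)$ produces a family $\rho_z \colon \pi_1 N \to \op{SL}_2(\C)$ parameterised by $z \in \C^\times$. Inducing up along the finite-index inclusion $\pi_1 N \hookrightarrow \pi_1 M$ yields representations $\pi_1 M \to \op{SL}_n(\C)$ with $n = 2d$, and letting $z$ vary gives an algebraic family, whose image in $R_n(M)$ I would check is an affine curve $D_T$ isomorphic (as a curve) to a $\Z$-quotient of $\C^\times$; its smooth projective model is $\mathbb{P}^1$ with exactly the two points over $z = 0$ and $z = \infty$ removed, so that the associated curve $C = t(D_T)$ in $X_n(M)$ is rational and has a single ideal point $\chi$ (the two points being interchanged by $z \leftrightarrow z^{-1}$, or otherwise identified by the trace functions). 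The valuation $v_\chi$ on $\C(D_T)$ is, up to the ramification from the induction, the $z$-adic valuation.

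Next I would analyse the action of $\pi_1 M$ on the Bruhat-Tits building $B_{\tilde\chi}$ associated to $\op{SL}_n(\C(D_T))$ via the tautological representation $\mathcal{P}$. The key point is that, because $\mathcal{P}$ is induced from the very explicit diagonal representation $\rho_z$ of the finite-index subgroup $\pi_1 N$, the building action is almost as transparent as in the rank-one case: over $\C(D_T)$, with $\omega = z$ (up to units), the standard lattice $\mathcal{O}_v^n$ and its scalings along the eigen-coordinates produce a distinguished apartment, and the $\pi_1 N$-action on it is governed entirely by $\phi$. Concretely, I would show there is a $\pi_1 M$-equivariant PL map from the universal cover $\widetilde M$ to $B_{\tilde\chi}^{(1)}$, built by taking a handle decomposition (or triangulation) of $M$ dual to $S$, sending the cocore of the $1$-handle dual to $S$ across an edge of the building in the $\phi$-direction, and sending everything else to vertices; equivariance and the flag condition on edges have to be verified, which is where the structure of the induced representation does the work. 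Passing to the quotient gives the required PL map $f \colon M \to B_{\tilde\chi}^{(1)}/\pi_1 M$.

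Finally I would identify $f^{-1}(\text{midpoints of edges})$ with parallel copies of $S$. By construction the only edges hit are the ones in the $\phi$-direction, and the preimage of their midpoints is exactly the preimage under $N \to M$-compatible data of the dual surface to $T$ in $N$, which downstairs is $S$ together with (possibly) the other components of $p^{-1}(S)$; a small isotopy and a count using that $T$ is dual to $\phi$ shows the preimage is isotopic to some number of parallel copies of $S$ and nothing else. The main obstacle I anticipate is the second step: making the PL map genuinely equivariant and simplicial into $B_{\tilde\chi}^{(1)}$ rather than merely into the $1$-skeleton of an apartment, i.e.\ checking that the images of adjacent handles really span an edge (satisfy the flag relation $\omega\Lambda' \subsetneq \Lambda \subsetneq \Lambda'$) and that no higher simplices or unwanted edges get introduced; controlling the ramification of $v_\chi$ coming from the induction $\op{SL}_2 \leadsto \op{SL}_{2d}$, so that $z$ really is (a power of) a uniformiser, is the delicate bookkeeping underlying this. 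The nontriviality of the action is not an issue, as it is guaranteed by \cite[Corollary 4.5]{HK}, but one still must check that the surface produced is the given $S$ and not some other essential surface also carried by $\chi$.
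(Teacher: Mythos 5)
Your overall strategy is the same as the paper's: pass to a finite cover via Corollary~\ref{cor_PW}, take the diagonal $\op{SL}_2(\C)$-family determined by the class dual to $T$, induce to $\op{SL}_{2d}(\C)$ to get the curves $D_T$ and $C_T$ with one ideal point, and then build an equivariant PL map to the $1$-skeleton of the Bruhat--Tits building. The first half of your plan is essentially the paper's argument (modulo the slip that $D_T$ is not ``a $\Z$-quotient of $\C^\times$'' but is isomorphic to $\C^\times$ itself, as one sees by reading off $z$ from an entry of $\rho_z(\mu)$ with $\psi(\mu)=1$).

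The genuine gap is exactly the step you defer as ``the main obstacle'': the construction and verification of the equivariant map into $B_{\tilde\chi}^{(1)}$ is the heart of the proof, and your sketch of it does not contain the mechanism that makes it work. The paper's solution is an explicit vertex assignment: choose a triangulation in which $S$ is normal and meets every tetrahedron in a single normal disc, take a $\pi_1 N$-equivariant height function $\tilde g\colon \widetilde M\to\R$ with $\tilde g^{-1}(\tfrac12+\Z)=\widetilde T$, and send a vertex $s$ of $\widetilde M$ to the homothety class of $\bigoplus_{i}\gamma_i\otimes\Lambda_{\tilde g(\gamma_i^{-1}s)}$; the point is that because the representation is induced, the building vertex must record the heights in all $d$ coset blocks simultaneously, and consistency across the blocks is exactly what the normal-surface hypothesis (all sheets meeting a tetrahedron are normal discs of the same type) provides. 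Your picture of sending the cocore of the handle dual to $S$ ``across an edge'' cannot be made to work as stated: the two lattice classes $[\Lambda_+]$, $[\Lambda_-]$ assigned to the two sides are related by an element of $\op{SL}_{2d}(\C(t))$, hence have the same type, so they are never adjacent; the paper shows they are at distance exactly $2$, joined through an intermediate class $[\Lambda']$, which is both what allows the simplicial extension over the barycentric subdivision and why the preimage of the midpoints is \emph{two} parallel copies of $S$. These verifications (Lemmas~\ref{lem_A} and \ref{lem_B}) are the substantive content of the second half, and a complete proof must supply them; on the other hand, your final worry about ``other components of $p^{-1}(S)$'' is not an issue, since every component of $p^{-1}(S)$ projects onto $S$, so downstairs the preimage can only consist of parallel copies of $S$, which the definition of ``given by an ideal point'' permits.
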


Let $S$ be a connected essential surface in $M$.
It follows from Corollary~\ref{cor_PW} that there exists a $d$-fold covering $p \colon N \to M$ for some $d$ such that $p^{-1}(S)$ contains a non-separating component $T$.
Then the proof is divided into two parts:
First, we construct a rational curve $C_T$ in $X_{2d}(M)$ with a unique ideal point $\chi_T$, which is determined by $T$.
Second, for a lift $\tilde \chi_T$ of $\chi_T$ we construct a PL map $f \colon M \to B_{\tilde \chi_T}^{(1)}$ whose inverse image of the set of midpoints of edges in $B_{\tilde \chi_T}$ is isotopic to two parallel copies of $S$.

\subsection{Construction of a curve}

We denote by $\psi \colon \pi_1 N \to \Z$ the epimorphism induced by the intersection pairing with $T$.
For each $z \in \C^\times$ we define the representation $\tilde \rho_z \colon \pi_1 N \to \op{SL}_2(\C)$ to be the composition of $\psi$ and the homomorphism $\Z \to \op{SL}_2(\C)$ which sends an integer $k$ to the matrix
\[
\begin{pmatrix}
z^k & 0 \\
0 & z^{-k}
\end{pmatrix}
. \]
We consider the induced representation $\rho_z \colon \pi_1 M \to \aut(\C[\pi_1 M] \otimes_{\C[\pi_1 N]} \C^2)$ of $\tilde\rho_z$.
Fixing representatives $\gamma_1, \dots, \gamma_d \in \pi_1 M$ of the elements of $\pi_1 M / p_*(\pi_1 N)$, we have the decomposition
\[ \C[\pi_1 M] \otimes_{\C[\pi_1 N]} \C^2\, =\, \mediumoplus{i = 1}{d} \gamma_i \otimes \C^2, \]
which is naturally identified with $\C^{2d}$.
Thus we regard $\rho_z$ as a representation $\pi_1 M \to \op{SL}_{2d}(\C)$.
We now set
\begin{align*}
D_T &= \{ \rho_z \in R_{2d}(M) ~:~ z \in \C^\times \}, \\
C_T &= \{ \chi_{\rho_z} \in X_{2d}(M) ~:~ z \in \C^\times \}.
\end{align*}

\begin{lem}
\begin{enumerate}
\item The set $D_T$ is a curve in $R_{2d}(M)$ isomorphic to $\C^\times$.
\item The set $C_T$ is a rational curve in $X_{2d}(M)$ with a unique ideal point.
\end{enumerate}
\end{lem}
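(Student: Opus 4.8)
The strategy is to prove part~(1) by a direct closed-immersion argument and part~(2) by exploiting the involution $z \mapsto z^{-1}$.

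For part~(1), I would first observe that $z \mapsto \rho_z$ is a morphism $\C^\times \to R_{2d}(M)$: in the basis $\{\gamma_i \otimes e_1,\, \gamma_i \otimes e_2\}$ of $\C^{2d}$, each $2 \times 2$ block of $\rho_z(\gamma)$ is either zero or equals $\tilde\rho_z(\gamma_j^{-1}\gamma\gamma_i) = \left(\begin{smallmatrix} z^{k} & 0 \\ 0 & z^{-k}\end{smallmatrix}\right)$ with $k = \psi(\gamma_j^{-1}\gamma\gamma_i)$, so every matrix entry of $\rho_z(\gamma)$ is a Laurent monomial in $z$, and $\det\rho_z(\gamma) = 1$ because the underlying permutation of the $d$ blocks induces an even permutation of the $2d$ coordinates. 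To see this morphism is a closed immersion onto its image $D_T$, choose the coset representative $\gamma_1 = 1$ and, using that $\psi$ is surjective, pick $\eta \in \pi_1 N$ with $\psi(\eta) = 1$. Then the $(1,1)$- and $(2,2)$-entries of the matrix $\rho_z(\eta)$ are coordinate functions of $R_{2d}(M)$ pulling back to $z$ and $z^{-1}$, so $\C[R_{2d}(M)] \twoheadrightarrow \C[z,z^{-1}]$; hence $D_T$ is a closed subvariety of $R_{2d}(M)$ isomorphic to $\C^\times$, and in particular a curve.

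For part~(2), the crucial point is that $\rho_z$ and $\rho_{z^{-1}}$ are conjugate in $\op{SL}_{2d}(\C)$: the matrix $w = \left(\begin{smallmatrix} 0 & -1 \\ 1 & 0 \end{smallmatrix}\right)$ swaps the two diagonal entries of each $\tilde\rho_z(\gamma)$, hence conjugates $\tilde\rho_z$ to $\tilde\rho_{z^{-1}}$, so $I_d \otimes w \in \op{SL}_{2d}(\C)$ conjugates $\rho_z$ to $\rho_{z^{-1}}$. Thus $\chi_{\rho_z}$ depends only on $u := z + z^{-1}$, and since every trace function $\tr\rho_z(\gamma)$ is a Laurent polynomial in $z$ invariant under $z \mapsto z^{-1}$, hence a polynomial in $u$, the map $z \mapsto \chi_{\rho_z}$ factors as $\C^\times \to \mathbb{A}^1 \xrightarrow{g} X_{2d}(M)$ with $g$ a morphism whose image is $C_T$. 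I would then check that $g$ is non-constant — so that $C_T$ is genuinely a curve — by noting that, again with $\eta$ as above, $\rho_z(\eta)$ has top-left $2 \times 2$ block $\left(\begin{smallmatrix} z & 0 \\ 0 & z^{-1}\end{smallmatrix}\right)$, so the Laurent polynomial $\tr\rho_z(\eta)$, which is a sum of contributions $z^{k_i}+z^{-k_i}$ over the diagonal blocks, has coefficient of $z^{1}$ at least $1$. Since $C_T$ is then dominated by $\mathbb{A}^1$, it is rational, its smooth projective model being $\mathbb{P}^1$ by Lüroth's theorem.

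It remains to locate the ideal points of $C_T$. Extend $g$ to $\bar g \colon \mathbb{P}^1 \to \overline{C_T}$; then $\overline{C_T} = g(\mathbb{A}^1) \cup \{\bar g(\infty)\}$, so the infinite constructible set $C_T = g(\mathbb{A}^1)$ equals either $\overline{C_T}$ or $\overline{C_T}\setminus\{\bar g(\infty)\}$. The first possibility is excluded because a complete curve cannot be a subvariety of the affine variety $X_{2d}(M)$; moreover $\bar g(\infty)\notin X_{2d}(M)$, since the trace function $\tr\rho_z(\eta)$ is a non-constant polynomial in $u$ and hence unbounded as $u \to \infty$. Thus $C_T = \overline{C_T}\setminus\{\bar g(\infty)\}$ is closed in $X_{2d}(M)$ and $\overline{C_T}\setminus C_T = \{\bar g(\infty)\}$. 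Finally, factoring the normalisation as $\mathbb{P}^1 \xrightarrow{\nu} \widetilde{C_T} \xrightarrow{\pi} \overline{C_T}$, any ideal point of $C_T$ lies in $\pi^{-1}(\bar g(\infty))$; writing such a point as $\nu(u_0)$ with $u_0 \in \mathbb{P}^1$, the case $u_0 \ne \infty$ is impossible, as it would give $\pi(\nu(u_0)) = g(u_0) \in C_T$. Hence $\nu(\infty)$ is the unique ideal point, which I would call $\chi_T$. The main obstacle, I expect, is not the symmetry argument itself but the two steps within part~(2): producing the non-constant trace function — the one place where surjectivity of $\psi$ and the blockwise description of the induced representation are genuinely needed — and carrying out the variety-theoretic bookkeeping (constructibility of $g(\mathbb{A}^1)$, closedness of $C_T$, the factorisation through the normalisation) carefully enough to conclude that the projective completion adds exactly one point and that it has a single preimage in $\widetilde{C_T}$. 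Part~(1), by contrast, is routine.
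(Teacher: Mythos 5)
Your proposal is correct and takes essentially the same route as the paper: part (1) via the $(1,1)$- and $(2,2)$-entries of $\rho_z$ at an element with $\psi=1$ (the paper packages this as an explicit inverse regular map to the curve $xy=1$ rather than a surjection of coordinate rings), and part (2) by factoring through $u=z+z^{-1}$ (the paper routes this through $X_2(\Z)\cong\C$), extending to $\mathbb{P}^1$, and using the induced surjection onto the smooth projective model to locate the unique ideal point at infinity. The only cosmetic differences are that the paper invokes completeness of $\mathbb{P}^1$ and the Riemann--Hurwitz formula where you use L\"uroth and an explicit factorization through the normalization, and your verification that $\bar g(\infty)$ lies outside the affine part is spelled out more carefully than in the paper's terse statement.
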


\begin{proof}
These sets of representations and characters are constructed along the following commutative diagram: 
\[
\begin{CD}
\C^\times @>>> R_2(\Z) @>>> R_2(N) @>>> R_{2d}(M) \\
@VVV @VVV @VVV @VVV \\ 
\C @>>> X_2(\Z) @>>> X_2(N) @>>> X_{2d}(M),
\end{CD}
\]
where the first vertical map sends $z \in \C^\times$ to $z + z^{-1} \in \C$, and the first bottom horizontal map is an isomorphism which sends $w \in \C$ to the character of $\Z$ whose image of $1 \in \Z$ is $w$.
The composition of the top horizontal maps is called $\Psi$ and the composition of the bottom horizontal maps is called $\Phi$.
Then the sets $D_T$ and $C_T$ coincide with the images of $\Psi$ and $\Phi$

We may assume $\gamma_1 \in \pi_1 N$, and take $\mu \in \pi_1 N$ with $\psi(\mu) = 1$. 
Then we have
\begin{align*}
\rho_z(\mu) &= 
\begin{pmatrix}
z & 0 \\
0 & z^{-1}
\end{pmatrix}
\oplus \mediumoplus{i = 2}{d}
\begin{pmatrix}
z^{\psi(\gamma_i^{-1} \mu \gamma_i)} & 0 \\
0 & z^{-\psi(\gamma_i^{-1} \mu \gamma_i)}
\end{pmatrix}, \\
\chi_{\rho_z}(\mu) &= z + z^{-1} + \sum_{i = 2}^d \left( z^{\psi(\gamma_i^{-1} \mu \gamma_i)} + z^{-\psi(\gamma_i^{-1} \mu \gamma_i)} \right).
\end{align*}
Hence the restriction of the map $R_{2d}(M) \to \C^2$ sending a representation $\rho$ to the vector of the $(1, 1)$- and $(2, 2)$-entries of $\rho(\mu)$ gives the inverse regular map $D_T \to \C^\times$ of $\Psi$, where $\C^\times$ is identified with the curve $xy - 1$ in $\C^2$, and $(1)$ is proved.

We deduce from the second equation above that the map $\Phi$ is not constant.
Also by fixing an affine space $\C^N$ containing $X_{2d}(M)$, we regard $\Phi$ as a map $\C \to \C^N$. 
We denote by $\overline \Phi \colon \mathbb{P}^1 \to \mathbb{P}^N$ the projective extension of $\Phi$.
Since $\overline \Phi$ is not a constant map, by the completeness of the projective line $\mathbb{P}^1$~\cite[Section I.9, Theorem 1]{Mu} the image $\overline C_T$ of $\overline \Phi$ is a projective curve, and by the Riemann-Hurwitz formula the curve $\overline C_T$ is rational.
Therefore the set $C_T$, which coincides with the intersection of $\overline C_T$ and $\C^N$, is an affine rational curve.
Since $\Phi$ induces a surjective regular map $\widetilde \Phi \colon \mathbb{P}^1 \to \widetilde C_T$ on the smooth projective models, the rational curve $C_T$ has a unique ideal point corresponding to the point at infinity of $\mathbb{P}^1$, which proves $(2)$.
\end{proof}

It is a simple matter to check that both the two ideal points of $D_T$ corresponding to $0$ and $\infty$ are lifts of the unique ideal point $\chi_T$ of $C_T$.
Let $\tilde \chi_T$ be the one corresponding to $0$.
Then as in Section \ref{subsec_HK} we obtain the nontrivial action $\pi_1 M$ on the Bruhat-Tits building $B_{\tilde \chi_T}$ associated to $\op{SL}_{2d}(\C(D_T))$.
We identify $\C(D_T)$ with the standard function field $\C(t)$ and the valuation at $\tilde \chi_T$ with the lowest degree of the Laurent expansion of a rational function.
Then the vector space $\C(t)^{2d}$ is decomposed into
\[ \C[\pi_1 M] \otimes_{\C[\pi_1 N]} \C(t)^2 \,=\, \mediumoplus{i = 1}{d} \gamma_i \otimes \C(t)^2, \]
where $\pi_1 N$ acts on $\C(t)^2$ by the representation $\mathcal{Q} \colon \pi_1 N \to \op{SL}_2(\C(t))$ defined by
\[ \mathcal{Q}(\gamma) =
\begin{pmatrix}
t^{\psi(\gamma)} & 0 \\
0 & t^{-\psi(\gamma)}
\end{pmatrix}
\] 
for $\gamma \in \pi_1 N$, and the tautological representation $\mathcal{P} \colon \pi_1 M \to \op{SL}_{2d}(\C(t))$ is given by the left multiplication on $\C[\pi_1 M] \otimes_{\C[\pi_1 N]} \C(t)^2$.

\subsection{Construction of a PL-map}

We take a triangulation of $M$ containing $S$ as a normal surface.
We may assume that the intersection of each tetrahedron with $S$ is connected, if necessary, replacing the triangulation by an appropriate subdivision.
The triangulation of $M$ induces ones of $N$ and the universal cover $\widetilde M$ of $M$, so that $T$ and its inverse image $\widetilde T$ by the covering $\widetilde M \to N$ are also normal surfaces. Then we take a cellular map $g \colon N \to \R / \Z$ such that $g^{-1}([\frac{1}{2}]) = T$, where we consider the cellular structure of $\R / \Z$ with one vertex corresponding to $\Z$.   
We define $\tilde g \colon \widetilde M \to \R$ to be the $\pi_1 N$-equivariant lift of $g$, so that $\tilde g^{-1}(\frac{1}{2} + \Z) = \widetilde T$.  

We now define a map $\tilde{f}^{(0)} \colon \widetilde M^{(0)} \to B_{\tilde \chi_T}^{(0)}$ as follows.
For $s \in \widetilde M^{(0)}$ we consider the lattice
\[ \mediumoplus{i=1}{d} \gamma_i \otimes \Lambda_{\tilde g(\gamma_i^{-1} s)} \]
in $\C[\pi_1 M] \otimes_{\C[\pi_1 N]} \C(t)^2$, where $\Lambda_n$ is the lattice in $\C(t)^2$ generated by the vectors
\[
\begin{pmatrix}
t^n \\
0
\end{pmatrix}
~\text{and}~
\begin{pmatrix}
0 \\
t^{-n}
\end{pmatrix}
. \]
Note that $\tilde g(\gamma_i^{-1} s)$ is an integer for each $i$ by the construction of $\tilde g$.
Then we set $\tilde{f}^{(0)}(s)$ to be the homothety class of the above lattice.
In the following two lemmas we observe the key properties of $\tilde f^{(0)}$.

\begin{lem} \label{lem_A}
The map $\tilde f^{(0)}$ is $\pi_1 M$-equivariant.
\end{lem}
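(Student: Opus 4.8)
The claim is that the vertex map $\tilde f^{(0)}\colon \widetilde M^{(0)}\to B_{\tilde\chi_T}^{(0)}$ defined by sending $s$ to the homothety class of $\bigoplus_{i=1}^d \gamma_i\otimes\Lambda_{\tilde g(\gamma_i^{-1}s)}$ intertwines the $\pi_1 M$-action on $\widetilde M$ (by deck transformations) with the $\pi_1 M$-action on $B_{\tilde\chi_T}$ coming from the tautological representation $\mathcal P$. The plan is to fix $\gamma\in\pi_1 M$ and $s\in\widetilde M^{(0)}$ and show that the lattice defining $\tilde f^{(0)}(\gamma s)$ and the lattice $\mathcal P(\gamma)$ applied to the lattice defining $\tilde f^{(0)}(s)$ are homothetic (in fact, I expect them to be literally equal, so there is nothing homothety-theoretic to worry about). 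Everything reduces to unwinding the definition of the induced representation together with the equivariance property $\tilde g(\delta x)=\tilde g(x)+\psi(\delta)$ for $\delta\in\pi_1 N$ that is built into the construction of $\tilde g$.

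\textbf{Key steps.} First, I record how $\mathcal P(\gamma)$ permutes the summands: for each $i$ there are a unique $j=j(i)$ and a unique $\delta_i\in\pi_1 N$ with $\gamma\gamma_i=\gamma_j\,\delta_i$ in $\pi_1 M$ (this is just the coset bookkeeping from the fixed representatives $\gamma_1,\dots,\gamma_d$ of $\pi_1 M/p_*(\pi_1 N)$), and then $\mathcal P(\gamma)$ sends $\gamma_i\otimes v$ to $\gamma_j\otimes \mathcal Q(\delta_i)v$. Second, I apply this to the lattice $\bigoplus_i \gamma_i\otimes\Lambda_{\tilde g(\gamma_i^{-1}s)}$: the $i$-th summand is carried into the $j(i)$-th slot as $\gamma_{j(i)}\otimes \mathcal Q(\delta_i)\Lambda_{\tilde g(\gamma_i^{-1}s)}$. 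Third, I use the elementary identity $\mathcal Q(\delta)\Lambda_n=\Lambda_{n+\psi(\delta)}$, which is immediate from the diagonal form $\mathcal Q(\delta)=\mathrm{diag}(t^{\psi(\delta)},t^{-\psi(\delta)})$ and the definition of $\Lambda_n$ as the $\mathcal O_v$-span of $(t^n,0)^{T}$ and $(0,t^{-n})^{T}$. Fourth, I compute the exponent: $\psi(\delta_i)+\tilde g(\gamma_i^{-1}s)$. Since $\gamma\gamma_i=\gamma_j\delta_i$, we get $\gamma_i^{-1}=\delta_i^{-1}\gamma_j^{-1}\gamma$, hence $\gamma_i^{-1}(\gamma^{-1}(\gamma s))$... let me instead phrase it so the equivariance of $\tilde g$ applies directly: from $\delta_i^{-1}\gamma_j^{-1}\gamma=\gamma_i^{-1}$ we have $\gamma_j^{-1}(\gamma s)=\delta_i\gamma_i^{-1}s$, so $\tilde g(\gamma_j^{-1}(\gamma s))=\tilde g(\delta_i\gamma_i^{-1}s)=\tilde g(\gamma_i^{-1}s)+\psi(\delta_i)$. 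This is exactly the exponent produced in step three, so the $j$-th summand of $\mathcal P(\gamma)\bigl(\tilde f^{(0)}(s)\bigr)$ is $\gamma_j\otimes\Lambda_{\tilde g(\gamma_j^{-1}(\gamma s))}$, which is precisely the $j$-th summand of the lattice defining $\tilde f^{(0)}(\gamma s)$. As $i\mapsto j(i)$ is a bijection, summing over $i$ (equivalently over $j$) gives the two lattices equal, hence the same homothety class, proving $\pi_1 M$-equivariance.

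\textbf{Main obstacle.} There is no serious analytic or geometric obstacle here; the statement is formal once the coset combinatorics of the induced representation is set up carefully. The one place to be attentive is the left/right convention in the cocycle $\gamma\gamma_i=\gamma_j\delta_i$ and the corresponding left-multiplication description of $\mathcal P$ on $\C[\pi_1 M]\otimes_{\C[\pi_1 N]}\C(t)^2$: one must make sure that $\delta_i$ lands in $\pi_1 N$ acting on the correct factor and that the equivariance $\tilde g(\delta x)=\tilde g(x)+\psi(\delta)$ (for $\delta\in\pi_1 N$ acting by deck transformations on $\widetilde M$, as a lift over $N$) is applied with the right sign; this is the property forcing $\tilde g^{-1}(\tfrac12+\Z)=\widetilde T$. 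Modulo getting these conventions consistent, the proof is a one-paragraph computation, and I would also remark at the end that since the lattices are equal on the nose, $\tilde f^{(0)}$ in fact descends to a well-defined map $\widetilde M^{(0)}/\pi_1 M=M^{(0)}\to B_{\tilde\chi_T}^{(0)}/\pi_1 M$.
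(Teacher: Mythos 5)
Your proof is correct and follows essentially the same route as the paper: the coset bookkeeping $\gamma\gamma_i=\gamma_{\sigma(i)}\delta_i$, the identity $\mathcal Q(\delta)\Lambda_n=\Lambda_{n+\psi(\delta)}$, the $\pi_1 N$-equivariance $\tilde g(\delta x)=\tilde g(x)+\psi(\delta)$, and the final reindexing over the permutation are exactly the steps in the paper's computation. Nothing to add.
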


\begin{proof}
For $\gamma \in \pi_1 M$ there exist a permutation $\sigma$ of degree $d$ and $\delta_i \in \pi_1 N$ such that 
\[ \gamma \gamma_i = \gamma_{\sigma(i)} \delta_i \]
for each $i$.
Then 
\[ \begin{array}{rclclcl}
\bigoplus\limits_{i=1}^d \gamma \gamma_i \otimes \Lambda_{\tilde{g}(\gamma_i^{-1} s)} &= &\bigoplus\limits_{i=1}^d \gamma_{\sigma(i)} \delta_i \otimes \Lambda_{\tilde{g}(\gamma_i^{-1} s)} &=& \bigoplus\limits_{i=1}^d \gamma_{\sigma(i)} \otimes \mathcal{Q}(\delta_i) \cdot \Lambda_{\tilde{g}(\gamma_i^{-1} s)} \\
&=& \bigoplus\limits_{i=1}^d \gamma_{\sigma(i)} \otimes \Lambda_{\tilde{g}(\gamma_i^{-1} s) + \psi(\delta_i)} 
&= &\bigoplus\limits_{i=1}^d \gamma_{\sigma(i)} \otimes \Lambda_{\tilde{g}(\delta_i \gamma_i^{-1} s)} \\
&=& \bigoplus\limits_{i=1}^d \gamma_{\sigma(i)} \otimes \Lambda_{\tilde{g}(\gamma_{\sigma(i)}^{-1} \gamma s)} 
&= &\bigoplus\limits_{i=1}^d \gamma_i \otimes \Lambda_{\tilde{g}(\gamma_i^{-1} \gamma s)}\end{array}
\]
for $\gamma \in \pi_1 M$ and $s \in \widetilde M^{(0)}$, which implies that $\tilde f^{(0)}$ is a $\pi_1 M$-equivariant map.
\end{proof}

\begin{lem} \label{lem_B}
For each tetrahedron $\Delta$ in $\widetilde M$ the set $\tilde f^{(0)}(\Delta^{(0)})$ consists of one vertex if $\gamma_i^{-1} \cdot \Delta$ does not intersect with $\widetilde T$ for any $i$, and two vertices of distance $2$ with respect to the graph metric on $B_{\tilde \chi}^{(1)}$ otherwise.
\end{lem}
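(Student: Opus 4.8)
The plan is to compute the distance between $\tilde f^{(0)}(s)$ and $\tilde f^{(0)}(s')$ for two vertices $s, s'$ of a single tetrahedron $\Delta$, working index by index in the direct sum decomposition. Fix a tetrahedron $\Delta$ in $\widetilde M$ and two of its vertices $s, s'$. For each $i$, the lattice $\Lambda_{\tilde g(\gamma_i^{-1} s)}$ and $\Lambda_{\tilde g(\gamma_i^{-1} s')}$ differ only through the integers $n_i = \tilde g(\gamma_i^{-1} s)$ and $n_i' = \tilde g(\gamma_i^{-1} s')$. Since $S$ is a normal surface whose intersection with each tetrahedron of $M$ is connected, the edge $\gamma_i^{-1}\Delta$ meets $\widetilde T = \tilde g^{-1}(\tfrac12+\Z)$ in at most one normal disc, hence the values $\tilde g(\gamma_i^{-1} v)$ over the four vertices $v$ of $\Delta$ take at most two distinct (consecutive) integer values; in particular $|n_i - n_i'| \le 1$ for every $i$, and $|n_i-n_i'|=1$ for some $i$ exactly when $\gamma_i^{-1}\Delta$ intersects $\widetilde T$ and $s,s'$ lie on opposite sides of that normal disc.

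Next I would identify, for a single factor $\C(t)^2$, the distance in the building $B$ of $\SL_2(\C(t))$ between the homothety classes $[\Lambda_n]$ and $[\Lambda_{n'}]$. A direct computation shows $\Lambda_n = \mathrm{diag}(t^{n}, t^{-n})\cdot \Lambda_0$, so these lattices lie on the standard apartment (the axis fixed setwise by the diagonal torus) and $d_{B^{(1)}}([\Lambda_n],[\Lambda_{n'}]) = 2|n-n'|$, because consecutive vertices $[\Lambda_k], [\Lambda_{k+1}]$ differ by the flag $t\Lambda_{k+1}\subsetneq \Lambda_k \subsetneq \Lambda_{k+1}$ only after passing through the intermediate lattice $\langle t^k e_1, t^{-k}e_2\rangle$ versus $\langle t^{k+1}e_1, t^{-k}e_2\rangle$ — that is, they are at distance $2$, not $1$, in the tree. (Equivalently, the type function forces even distance between same-type vertices; here both $\Lambda_n$ and $\Lambda_0$ have type $0$.) I would record this as a short sublemma on the standard apartment in $B_v$ and its graph metric.

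Finally I would assemble the global statement from the factorwise one. The lattice $\tilde f^{(0)}(s) = \bigoplus_i \gamma_i\otimes\Lambda_{n_i}$ and $\tilde f^{(0)}(s') = \bigoplus_i \gamma_i\otimes\Lambda_{n_i'}$ sit in a common apartment of $B_{\tilde\chi_T}$ (the product of the per-factor axes), and in that apartment the distance is $\sum_i 2|n_i - n_i'|$. If no $\gamma_i^{-1}\Delta$ meets $\widetilde T$, then every $\tilde g$ is constant on $\gamma_i^{-1}\Delta^{(0)}$, so $n_i = n_i'$ for all four vertices and $\tilde f^{(0)}(\Delta^{(0)})$ is a single point. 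Otherwise at least one index contributes, and since for every vertex pair $\sum_i 2|n_i-n_i'|$ is either $0$ or $2$ (it cannot exceed $2$ because at most one tetrahedron-face of $\widetilde T$ can separate two vertices of the same $\Delta$ — this uses connectedness of $S\cap(\text{tetrahedron})$ once more, now to bound how many of the $\gamma_i^{-1}\Delta$ can simultaneously be cut, which is at most one since the $\gamma_i^{-1}\Delta$ are disjoint translates and $\widetilde T$ is embedded), the image $\tilde f^{(0)}(\Delta^{(0)})$ consists of exactly two vertices at distance $2$. I expect the main obstacle to be the combinatorial bookkeeping in this last step: one must check that, across all $\binom{4}{2}$ pairs of vertices of $\Delta$, exactly one index $i$ ever contributes and it contributes $|n_i - n_i'| = 1$, so that the image is two points rather than three or four, and the two realized values are the two endpoints of a single edge-pair — this is where the hypothesis that each tetrahedron meets $S$ in a connected (hence single) normal disc is doing the real work.
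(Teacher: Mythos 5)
Your per-factor computation in the $\op{SL}_2$-tree is fine, but the way you assemble the global statement has two genuine errors, and they are exactly the points where the paper's proof does its real work. First, your claim that at most one of the translates $\gamma_i^{-1}\Delta$ can meet $\widetilde T$ is false, and the justification offered (``the $\gamma_i^{-1}\Delta$ are disjoint translates and $\widetilde T$ is embedded'') is not an argument: $\widetilde T$ is the full preimage of $T$ in the universal cover, an infinite embedded surface with many components, and all the translates $\gamma_i^{-1}\Delta$ project to the \emph{same} tetrahedron of $M$, so whenever that tetrahedron meets $S$ one should expect several (possibly all $d$) of the $\gamma_i^{-1}\Delta$ to meet $\widetilde T$ simultaneously. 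What the hypothesis ``each tetrahedron meets $S$ in a connected normal disc'' actually yields -- and what the paper uses -- is that all the nonempty intersections $\gamma_i^{-1}\Delta\cap\widetilde T$ are normal discs of the \emph{same type}, so the induced partition of $\Delta^{(0)}$ into $\Delta_+^{(0)}$ and $\Delta_-^{(0)}$ is the same for every intersecting index; that is what keeps the image down to exactly two vertices when $m>1$.

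Second, your distance formula $\sum_i 2|n_i-n_i'|$ for vertices of $B_{\tilde\chi_T}$ is incorrect: the building of $\op{SL}_{2d}(\C(t))$ is not a product of $d$ trees, and the graph distance between lattice classes $[\Lambda]$ and $[\Lambda']$ is $\max_j a_j-\min_j a_j$, where $t^{a_1},\dots,t^{a_{2d}}$ are the invariant factors of $\Lambda'$ relative to $\Lambda$ -- so if several factors each shift by one, the distance is still $2$, not $2m$. As written, your argument would produce distances greater than $2$ precisely in the (actually occurring) case of several intersecting indices, contradicting the lemma; the false ``at most one index'' claim is what hides this. The paper avoids any general distance formula: it exhibits the explicit lattice $\Lambda'$ with $t\Lambda'\subsetneq\Lambda_\pm\subsetneq\Lambda'$, so both image vertices are adjacent to $[\Lambda']$ and the distance is at most $2$, and then observes that the block matrix $\bigl(\bigoplus_k\gamma_{i_k}\otimes\mathrm{diag}(t,t^{-1})\bigr)\oplus\mathrm{id}$ in $\op{SL}_{2d}(\C(t))$ carries $[\Lambda_-]$ to $[\Lambda_+]$; since the action is type-preserving, the two (distinct) vertices have the same type, cannot be adjacent, and hence are at distance exactly $2$. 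To repair your write-up you would need to replace the summation formula by this invariant-factor (or common-neighbor plus type) argument and drop the ``only one translate is cut'' claim in favor of the same-type-normal-disc observation.
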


\begin{proof}
If $\gamma_i^{-1} \cdot \Delta$ does not intersect with $\widetilde T$ for any $i$, then it follows from the choice of $g$ that there exists some $n_i \in \Z$ such that
\[ \tilde g(\gamma_i^{-1} \cdot \Delta^{(0)}) = \{ n_i \} \]
for each $i$, and hence we obtain
\[ \tilde f^{(0)}(\gamma_i^{-1} \cdot \Delta^{(0)}) = \left\{ \left[  \mediumoplus{i = 1}{d} \gamma_i \otimes \Lambda_{n_i} \right] \right\}. \]

In the following we consider the case where $\gamma_i^{-1} \cdot \Delta$ intersects with $\widetilde T$ for $i = i_1, \dots, i_m$.  
Then the intersections of $\gamma_{i_k}^{-1} \cdot \Delta$ with $\widetilde T$ are all connected and of same type for $k = 1, \dots, m$, since otherwise $p(T) = S$ implies that the intersection of some tetrahedron in $M$ with $S$ is not a normal disc, which contradicts the choice of the triangulation of $M$.
Thus $\Delta^{(0)}$ is divided into two subsets $\Delta_+^{(0)}$ and $\Delta_-^{(0)}$ satisfying the following:
\begin{enumerate}
\item there exists some $n_{i_k} \in \Z$ such that
\[ \tilde g(\gamma_{i_k}^{-1} \cdot \Delta_+^{(0)}) = \{ n_{i_k} + 1 \} ~\text{and}~ \tilde g(\gamma_{i_k}^{-1} \cdot \Delta_-^{(0)}) = \{ n_{i_k} \} \]
for $k = 1, \dots, m$;
\item there exists some $n_i \in \Z$ such that
\[ \tilde g(\gamma_i^{-1} \cdot \Delta^{(0)}) = \{ n_i \} \]
for $i \neq i_1, \dots, i_m$.
\end{enumerate}
Hence we obtain
\[ \tilde f^{(0)}(\Delta_+^{(0)}) = \{ [\Lambda_+] \} ~\text{and}~ \tilde f^{(0)}(\Delta_-^{(0)}) = \{ [\Lambda_-] \}, \]
where
\begin{align*}
\Lambda_+ &= \bigg( \mediumoplus{k = 1}{m} \gamma_{i_k} \otimes \Lambda_{n_{i_k} + 1} \bigg) \oplus \bigg( \mediumoplus{i \neq i_1, \dots, i_m}{} \gamma_i \otimes \Lambda_{n_i} \bigg), \\
\Lambda_- &= \bigg( \mediumoplus{k = 1}{m} \gamma_{i_k} \otimes \Lambda_{n_{i_k}} \bigg) \oplus \bigg( \mediumoplus{i \neq i_1, \dots, i_m}{} \gamma_i \otimes \Lambda_{n_i} \bigg).
\end{align*}
Since
\begin{align*}
t \Lambda_n' \subsetneq \Lambda_{n+1} \subsetneq \Lambda_n', \\
t \Lambda_n' \subsetneq \Lambda_n \subsetneq \Lambda_n' 
\end{align*}
for $n \in \Z$, where $\Lambda_n'$ is the lattice in $\C(t)^2$ generated by the vectors
\[
\begin{pmatrix}
t^n \\
0
\end{pmatrix}
~\text{and}~
\begin{pmatrix}
0 \\
t^{-n-1}
\end{pmatrix}
, \]
we have
\begin{align*}
t \Lambda' \subsetneq \Lambda_+ \subsetneq \Lambda', \\
t \Lambda' \subsetneq \Lambda_- \subsetneq \Lambda',
\end{align*}
where
\[ \Lambda' = \bigg( \mediumoplus{k = 1}{m} \gamma_{i_k} \otimes \Lambda_{n_{i_k}}' \bigg) \oplus \bigg( \mediumoplus{i \neq i_1, \dots, i_m}{} \gamma_i \otimes \Lambda_{n_i} \bigg). \]
By the definition of the building $B_{\tilde{\chi}}$ these relations imply that there exist edges in $B_{\tilde{\chi}}$ connecting $[\Lambda_+]$ and $[\Lambda_-]$ with $[\Lambda']$, and hence the distance between $[\Lambda_+]$ and $[\Lambda_-]$ is at most $2$ in $B_{\tilde{\chi}}^{(1)}$.
We further observe that the matrix
\[ \left( \bigoplus_{i=1}^n \gamma_{i_k} \otimes
\begin{pmatrix}
t & 0 \\
0 & t^{-1}
\end{pmatrix}
\right) \oplus \left( \bigoplus_{i \neq i_1, \dots, i_m} \gamma_i \otimes
\begin{pmatrix}
1 & 0 \\
0 & 1
\end{pmatrix}
\right) \]
in $\op{SL}_{2d}(\C(t))$ sends $[\Lambda_-]$ to $[\Lambda_+]$.
Since the action of $\op{SL}_{2d}(\C(t))$ on $B_{\tilde{\chi}}$ is type-preserving, the distance between them is exactly equal to $2$, and the lemma follows.
\end{proof}

We are now in position to construct a desired PL map $f \colon M \to B_{\tilde \chi_T}^{(1)} / \pi_1 M$.
It follows from Lemmas \ref{lem_A} and \ref{lem_B} that $\tilde f^{(0)}$ extends to a $\pi_1 M$-equivariant simplicial map $\tilde f \colon \widetilde{N} \to B_{\tilde \chi_T}^{(1)}$ with respect to the $1$st barycentric subdivision of the triangulation of $\widetilde{M}$.
We define $f \colon M \to B_{\tilde \chi_T}^{(1)} / \pi_1 M$ to be the quotient of $\tilde f$ by $\pi_1 M$.
By the construction of $\tilde f$ we check at once that the inverse image of the set of midpoints of edges in $B_{\tilde \chi_T}$ by $\tilde f$ is isotopic to two parallel copies of $\widetilde T$.
Since $\tilde f$ is $\pi_1 M$-equivariant and since $p(T) = S$, the inverse image of the set of midpoints of edges in $B_{\tilde \chi_T} / \pi_1 M$ by $f$ is isotopic to two parallel copies of $S$.
Therefore $S$ is given by $\chi_T$, which completes the proof.



\begin{thebibliography}{FKN}
\bibitem[AB]{AB} P.~Abramenko and K.~S.~Brown,
\textit{Buildings. Theory and applications},
Graduate Texts in Mathematics, 248, Springer, New York, 2008. xxii+747 pp.

\bibitem[A]{A} I.~Agol,
\textit{The virtual Haken conjecture},
With an appendix by Agol, Daniel Groves, and Jason Manning,
Doc.\ Math.\ \textbf{18} (2013), 1045-–1087.

\bibitem[BZ]{BZ} S.~Boyer and X.~Zhang,
\textit{On Culler-Shalen seminorms and Dehn filling},
Ann.\ of Math.\ (2) \textbf{148} (1998), no.\ 3, 737--801.

\bibitem[BT1]{BT1} F.~Bruhat and J.~Tits,
\textit{Groupes réductifs sur un corps local. II. Sch\'emas en groupes. Existence d'une donn\'ee radicielle valu\'ee},
Inst.\ Hautes \'Etudes Sci.\ Publ.\ Math.\ \textbf{60} (1984), 197-–376.

\bibitem[BT2]{BT2} F.~Bruhat and J.~Tits,
\textit{Groupes réductifs sur un corps local} (French),
Inst.\ Hautes Études Sci.\ Publ.\ Math.\ \textbf{41} (1972), 5–-251.

\bibitem[C]{C} J.~M.~Corson,
\textit{Complexes of groups},
Proc.\ London Math.\ Soc.\ (3) \textbf{65} (1992), no.\ 1, 199-–224. 

\bibitem[CS]{CS} M.~Culler and P.~B.~Shalen,
\textit{Varieties of group representations and splittings of $3$-manifolds},
Ann.\ of Math.\ (2) \textbf{117} (1983), no.\ 1, 109--146.

\bibitem[FKN]{FKN} S.~Friedl, T.~Kitayama and M.~Nagel,
\textit{A note on the existence of essential tribranched surfaces},
arXiv:1505.01806.

\bibitem[F]{F} W.~Fulton,
\textit{Algebraic Curves},
Advanced Book Program, W.A. Benjamin Inc., Massachusetts, 1974.

\bibitem[G]{G} P.~Garrett,
\textit{Buildings and classical groups},
Chapman \& Hall, London, 1997. xii+373 pp.

\bibitem[HK]{HK} T.~Hara and T.~Kitayama,
\textit{Character varieties of higher dimensional representations and splittings of $3$-manifolds},
arXiv:1410.4295.

\bibitem[Ha]{Ha} A.~Haefliger,
\textit{Complexes of groups and orbihedra},
Group theory from a geometrical viewpoint (Trieste, 1990), 504-–540, World Sci.\ Publ., River Edge, NJ, 1991.

\bibitem[He]{He} J.~Hempel,
\textit{$3$-Manifolds},
Ann. of Math. Studies, no. 86. Princeton University Press, Princeton, N. J., 1976.


\bibitem[IM]{IM} N.~Iwahori and H.~Matsumoto,
\textit{On some Bruhat decomposition and the structure of the Hecke rings of $p$-adic Chevalley groups},
Inst.\ Hautes \'Etudes Sci.\ Publ.\ Math.\ \textbf{25} (1965), 5-–48.

\bibitem[L]{L} Y.~Liu,
\textit{A characterization of virtually embedded subsurfaces in 3-manifolds},
arXiv:1406.4674.

\bibitem[LM]{LM} A.~Lubotzky and A.~R.~Magid,
\textit{Varieties of representations of finitely generated groups},
Mem.\ Amer.\ Math.\ Soc.\ \textbf{58} (1985), no.\ 336, xi+117 pp.

\bibitem[Mo]{Mo} K.~Motegi,
\textit{Haken manifolds and representations of their fundamental groups in $\op{SL}(2, \C)$},
Topology Appl.\ \textbf{29} (1988), no.\ 3, 207--212.

\bibitem[Mu]{Mu} D.~Mumford,
\textit{The red book of varieties and schemes. Second, expanded edition. Includes the Michigan lectures (1974) on curves and their Jacobians. With contributions by Enrico Arbarello},
Lecture Notes in Mathematics, 1358. Springer-Verlag, Berlin, 1999. x+306 pp.

\bibitem[MFK]{MFK} D.~Mumford, J.~Fogarty and F.~Kirwan,
\textit{Geometric invariant theory, Third edition},
Ergebnisse der Mathematik und ihrer Grenzgebiete (2)  34. Springer-Verlag, Berlin, 1994. xiv+292 pp.

\bibitem[P]{P} C.~Procesi,
\textit{The invariant theory of $n \times n$  matrices},
Advances in Math.\ \textbf{19} (1976), no.\ 3, 306--381.

\bibitem[PW1]{PW1} P.~Przytycki and D.~T.~Wise,
\textit{Graph manifolds with boundary are virtually special},
J.\ Topol.\ \textbf{7} (2014), no.\ 2, 419-–435.

\bibitem[PW2]{PW2} P.~Przytycki and D.~T.~Wise,
\textit{Separability of embedded surfaces in $3$-manifolds},
Compos.\ Math.\ \textbf{150} (2014), no.\ 9, 1623-–1630.

\bibitem[SZ]{SZ} S.~Schanuel and X.~Zhang,
\textit{Detection of essential surfaces in $3$-manifolds with $\op{SL}_2$-trees},
Math.\ Ann.\ \textbf{320} (2001), no.\ 1, 149--165.

\bibitem[Sc]{Sc} P.~Scott,
\textit{Subgroups of surface groups are almost geometric},
J.\ London Math.\ Soc.\ (2) \textbf{17} (1978), no.\ 3, 555-–565.

\bibitem[Se1]{Se1} J.-P.~Serre,
\textit{Arbres, amalgames, $\op{SL}_2$},
Avec un sommaire anglais, R\'{e}dig\'{e} avec la collaboration de Hyman Bass,
Ast\'{e}risque, No.\ 46, Soci\'{e}t\'{e} Mathematique de France, Paris, 1977,
189 pp.

\bibitem[Se2]{Se2} J.-P.~Serre,
\textit{Trees},
Translated from the French by John Stillwell, Springer-Verlag, Berlin-New York,
1980. ix+142 pp.

\bibitem[Sh]{Sh} P.~B.~Shalen,
\textit{Representations of $3$-manifold groups},
Handbook of geometric topology, 955--1044, North-Holland, Amsterdam, 2002.

\bibitem[Si1]{Si1}  A.~S.~Sikora,
\textit{$\op{SL}_n$-character varieties as spaces of graphs},
Trans.\ Amer.\ Math.\ Soc.\ \textbf{353} (2001), no. 7, 2773–-2804 (electronic).

\bibitem[Si2]{Si2}  A.~S.~Sikora,
\textit{Character varieties},
Trans.\ Amer.\ Math.\ Soc.\ \textbf{364} (2012), no. 10, 5173–-5208.

\bibitem[W]{W} D.~T.~Wise,
\textit{The structure of groups with a quasi-convex hierarchy},
preprint (2011), available at http://www.math.mcgill.ca/wise/papers.html.
\end{thebibliography}
\end{document}